\newenvironment{pf}{\begin{proof}}{\end{proof}}
\newtheorem{Th}{Theorem}[section] 
\newtheorem{Cor}[Th]{Corollary} 
\newtheorem{Prop}[Th]{Proposition}
\newtheorem{Def}[Th]{Definition}
\def\cE{\mathcal E}
\newcommand{\cI}{\mathcal{I}}
\newcommand{\cM}{\mathcal{M}}
\newcommand{\cO}{\mathcal{O}}
\def\cM{\mathcal M } 
\def\cI{\mathcal I }
\newcommand{\C}{\mathbb{C}} 
\renewcommand{\P}{\mathbb{P}}
\newcommand{\Z}{\mathbb{Z}}
\def\id{\mbox{id}} 
\def\Ext{\mbox{Ext}}
\def\Hom{\mbox{Hom}}
\def\Pic{\mbox{Pic}}
\title{Vector bundles on blown-up Hopf surfaces}
\author[Matei Toma]{Matei Toma}
\address{Matei Toma, 
Universit\'e de Lorraine, CNRS, IECL, F-54000 Nancy, France}
\email{Matei.Toma@univ-lorraine.fr}
\urladdr{\href{https://iecl.univ-lorraine.fr/membre-iecl/toma-matei/}{https://iecl.univ-lorraine.fr/membre-iecl/toma-matei/}}
\date{December 2011}
\thanks{AMS Classification (2000): 32G13, 32J15.\\
Acknowledgement: I wish to thank the referee for his remarks which helped me to improve the exposition.}
\keywords{compact complex surfaces, moduli spaces, Hopf surfaces, vector bundles}
\begin{document}
\maketitle

\begin{abstract} {We show that certain moduli spaces of vector bundles over blown-up primary Hopf surfaces admit no compact components. These are the moduli spaces used by Andrei Teleman in his work on the classification of class $VII$ surfaces.}
\end{abstract}

\section{Introduction}
Moduli spaces of holomorphic vector bundles over compact complex manifolds have been extensively studied. A case of particular interest, where much has been proved is the case when the base $X$ is a smooth projective surface \cite{HL}. With respect to an ample polarization $H$ on $X$ one considers the moduli space $\cM^s_{lf}(r,L,c_2)$ of slope-stable holomorphic vector bundles of rank $r$, determinant $L\in\Pic(X)$ and second Chern class $c_2\in H^4(X,\Z)\cong \Z$. One issue about it, of capital importance for Donaldson theory for instance, is the existence of ``modular'' compactifications. Two such compactifications of $\cM^s_{lf}(r,L,c_2)$ have been constructed: the Gieseker compactification of semi-stable torsion free sheaves and the Uhlenbeck compactification of ideal Hermite-Einstein connections. Both are projective, although the second one is constructed in the framework of gauge theory. 

In the more general case of a compact complex surface $(X,g)$ equipped with a Gauduchon metric, a degree function on $\Pic(X)$ with respect to $g$ may be defined, hence a (slope) stability notion for torsion-free coherent sheaves on $X$ \cite{LT}. Under some supplementary condition, which essentially demands that every semi-stable sheaf with the given invariants is already stable, it was shown in \cite{Tom01} that the moduli space $\cM^s(r,L,c_2)$ of stable torsion free sheaves with invariants $(r,L,c_2)$ is compact. This is a complex analytic space and provides a modular compactification of  $\cM^s_{lf}(r,L,c_2)$ which may be thought as an analogue of the Gieseker compactification. %However a general construction of a Gieseker compactification is not known even when $(X,g)$ is K\"ahler. 
In general however $\cM^s(r,L,c_2)$ is not compact and this is related to the fact 
that the Uhlenbeck compactification, which always exists, may admit no compatible complex space structure.

The study of the moduli spaces of holomorphic vector bundles allowed  Andrei Teleman in  \cite{Tel05}, \cite{Tel10}  to make  a breakthrough towards a complete classification of non-K\"ahlerian compact complex surfaces. One of the main issues of his study was the lack of compactness of $\cM^s(2,K_X,0)$ in his situation. 
The purpose of this note is to show the non-compactness of $\cM^s(2,K_X,0)$  in the case when $X$ is a blown-up primary Hopf surface. The case of blown-up Hopf surfaces is particularly important in  light of Teleman's main result from  \cite{Tel10}. Indeed, it follows from loc. cit. that all minimal surfaces $X$ with $b_1(X)=1$, $b_2(X)=2$ are deformations of blown-up Hopf surfaces, still their complete classification is not yet available.  In fact our non-compactness result was used by Sch\"obel in \cite{Sch08} via a deformation argument in order to describe $\cM^s_{lf}(2,K_X,0)$ when $X$ is a minimal surface with $b_1(X)=1$, $b_2(X)=1$.  This type of argument is an indication that the part of Teleman's program dealing with non-compactness phenomena should work also for $b_2(X)>2$.

\section{Families of extensions}
We consider a primary Hopf surface $X$ and $\hat X$ to be $X$ blown-up at
 points $x_1, ..., x_n \in X$, $n\geq 1$. We shall denote by
 $\pi :\hat X \rightarrow    X$ the blowing down morphism and by $D_1+...+D_n$
 the exceptional divisor. Vector bundles will not
 be distinguished from their sheaves of holomorphic sections. 
One has $\Pic (X) \cong \C ^*$ and $\Pic(\hat X)$
 is the product of $\pi ^*(\Pic(X))$ with the free abelian group generated
 by   $\cO (D_1)$, ..., $\cO (D_n)$. Therefore any invertible sheaf on 
 $\hat X$ has the form $\pi ^*(L)(D)$ for some $L \in \Pic (X)$ and
 $D= \sum a_i D_i$, $a_i\in\Z$. We shall write shortly $L(D)=\pi ^*(L)(D)$ for it.
In particular $K_{\hat X}=K_X(D_1+...+D_n)$. 
For any rank two coherent sheaf $E$ on a compact complex surface $X$
one defines the discriminant:
\[
 \Delta(E) := \frac{1}{2}\left(c_2(E)-\frac{1}{4}c_1(E)^2\right).
\]
Recall that for a torsion free coherent sheaf $E$ on a non-algebraic surface one always has
\[
 \Delta(E)\ge 0.
\]

\begin{Prop}\label{tip}
Let $\hat E$ be a torsion free sheaf of rank two on $\hat X$ with $\det(\hat E)=K_{\hat X}$ and $c_2(\hat E)=0$. Then:
\begin{enumerate}
\item $\hat E$ is locally free and $\Delta(\hat E)=\frac{n}{8}$.
\item There are no torsion free sheaves $F$ of rank two on $\hat X$ with $\det(F)=K_{\hat X}$ and $c_2(F)<0$.
\item $\hat E$ is the central term of an extension of the form
$$0 \rightarrow K_{\hat X} (-D)\otimes L^{-1} \rightarrow \hat E \rightarrow  L(D) \rightarrow 0,$$
where $L\in \Pic(X)$ and $D= \sum a_i D_i$ with $a_i\in \{ 0,1\}$.
\end{enumerate}

\end{Prop}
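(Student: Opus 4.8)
The plan is to isolate one genuine input — filtrability of rank-two bundles on $\hat X$ with small discriminant — and to derive all three assertions from it by a short computation in $\Pic(\hat X)$ and with intersection numbers. First I would record the numerics. Since $X$ is a primary Hopf surface, $b_2(X)=0$, so every class in $\pi^{*}\Pic(X)$ has vanishing self-intersection and vanishing intersection with each $D_i$, while $D_i\cdot D_j=-\delta_{ij}$. Hence $c_1(K_{\hat X})^{2}=\bigl(\pi^{*}c_1(K_X)+\sum_i D_i\bigr)^{2}=-n$, and therefore $\Delta(\hat E)=\frac12\bigl(0+\frac n4\bigr)=\frac n8$, which is the numerical part of (1). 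I would also note that $\hat X$, being the blow-up of a non-algebraic surface, is itself non-algebraic, so $\Delta\ge 0$ holds for every torsion-free sheaf on it.

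The crux, which I expect to be the main obstacle and which I would quote from the theory of holomorphic bundles on non-algebraic surfaces, is the following: a torsion-free rank-two sheaf $F$ on $\hat X$ with $\det F=K_{\hat X}$ and $c_2(F)\le 0$ admits a sub-line-bundle $M$ with torsion-free quotient $\cI_Z\otimes N$, where $N\in\Pic(\hat X)$ and $Z$ is a zero-dimensional subscheme. Indeed, replacing $F$ by its bidual $F^{**}$ (locally free, with the same determinant, and with $c_2(F^{**})=c_2(F)-\ell(F^{**}/F)\le 0$) reduces to the case of a bundle; then $\Delta(F^{**})\le\frac n8$ is small enough that the non-algebraicity of $\hat X$ forces $F^{**}$ to be filtrable, giving the exact sequence $0\to M\to F^{**}\to\cI_Z\otimes N\to 0$. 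This is precisely where non-algebraicity is indispensable, since on a projective surface non-filtrable rank-two bundles exist for all sufficiently large $c_2$.

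Granting this, the rest is bookkeeping. Writing $M=M_0(\sum_i m_iD_i)$ and $N=N_0(\sum_i n_iD_i)$ with $M_0,N_0\in\Pic(X)$, the relation $M\otimes N=\det F^{**}=K_{\hat X}=K_X(\sum_i D_i)$ together with the splitting $\Pic(\hat X)=\pi^{*}\Pic(X)\oplus\bigoplus_i\Z D_i$ forces $M_0\otimes N_0=K_X$ and $m_i+n_i=1$ for every $i$. By the intersection pattern above, $M\cdot N=\sum_i m_in_i\,D_i^{2}=-\sum_i m_i(1-m_i)\ge 0$, since $m(1-m)\le 0$ for every integer $m$; together with $c_2(F^{**})=M\cdot N+\ell(Z)$ this yields $c_2(F^{**})\ge 0$. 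Assertion (2) now follows at once: if some torsion-free rank-two $F$ had $\det F=K_{\hat X}$ and $c_2(F)<0$, then $c_2(F^{**})\le c_2(F)<0$, contradicting $c_2(F^{**})\ge 0$.

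Finally I would specialize to $\hat E$ to obtain (1) and (3). If $\hat E$ were not locally free, $\hat E^{**}$ would have determinant $K_{\hat X}$ and $c_2(\hat E^{**})<0$, contradicting (2); hence $\hat E$ is locally free, which with the computation above completes (1). Applying the filtrability step to $F=\hat E$ itself gives $0\to M\to\hat E\to\cI_Z\otimes N\to 0$ with $0=c_2(\hat E)=M\cdot N+\ell(Z)$; as both terms are non-negative they vanish, so $M\cdot N=0$ forces $m_i(1-m_i)=0$, i.e. $m_i,n_i\in\{0,1\}$, and $\ell(Z)=0$ forces $Z=\emptyset$. Thus $\hat E$ is an extension of the line bundle $N$ by the line bundle $M$; setting $L:=N_0\in\Pic(X)$ and $D:=\sum_i n_iD_i$ (so the coefficients $a_i:=n_i$ lie in $\{0,1\}$), we have $N=L(D)$ and $M=K_{\hat X}\otimes N^{-1}=K_{\hat X}(-D)\otimes L^{-1}$, which is precisely the asserted extension.
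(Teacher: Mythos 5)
Your overall bookkeeping (the computation $c_1(K_{\hat X})^2=-n$, the reduction to the bidual, the intersection argument forcing $a_i\in\{0,1\}$ and $Z=\emptyset$, and the derivation of (1) and (3) from (2) plus filtrability) is sound, but the step you present as a quotable input is precisely the heart of the proposition and is not available in the form you use it. There is no general theorem saying that on a non-algebraic surface a rank-two bundle with small discriminant must be filtrable; in fact your supporting heuristic is inverted. On a projective surface \emph{every} torsion-free sheaf is filtrable (twisting by a high multiple of an ample divisor always produces sections), so non-filtrable bundles are exclusively a non-algebraic phenomenon, and the existence results of B\u{a}nic\u{a}--Le Potier type show that they typically occur exactly in the range of small (but non-negative) discriminant, where filtrable bundles may fail to exist. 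So ``$\Delta(F^{**})\le n/8$ plus non-algebraicity of $\hat X$ forces filtrability'' is unjustified, and since the invariants $(2,K_{\hat X},0)$ do admit filtrable bundles, nothing general rules out non-filtrable ones with the same invariants; ruling them out is exactly what has to be proved.

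The paper's actual route to filtrability is specific to the Hopf geometry and goes through the blow-down. Using the Leray sequence and Riemann--Roch one gets $\chi(\hat E)=\chi(\pi_*\hat E)-\chi(R^1\pi_*\hat E)$ together with $\Delta(\pi_*\hat E)\ge 0$, which forces $\Delta(\pi_*\hat E)=0$, $R^1\pi_*\hat E=0$, and local freeness of $\pi_*\hat E$ and then of $\hat E$ (this also yields (2) directly, since $c_2(F)<0$ would give $\Delta(\pi_*F)<0$). Then, since $c_2(\pi_*\hat E)=0$, stability of $\pi_*\hat E$ with respect to some Gauduchon metric would produce an irreducible $SU(2)$-representation of $\pi_1(X)$, which is infinite cyclic and has none (this is the appeal to \cite{Pla95}); hence $\pi_*\hat E$ is not stable, so it has a rank-one subsheaf, which induces one on $\hat E$, and only then does your concluding computation with $0\to L_1\to\hat E\to\cI_Z\otimes L_2\to 0$ apply. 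In short: the structure of your argument after the filtrability claim matches the paper, but the claim itself is a genuine gap, and the representation-theoretic input on the minimal Hopf surface (not a small-discriminant principle on $\hat X$) is what closes it.
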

\begin{proof}
It is clear that  $\det(\pi _*(\hat E))=K_X$.
One has $\chi(\hat E)=\chi(\pi _*(\hat E))-\chi(R^1\pi_*(\hat E))\le \chi(\pi _*(\hat E))$. By Riemann-Roch we compute $\chi(\hat E)$ and $\chi(\pi _*(\hat E))$ as follows
$$ \chi(\hat E)= 2(\chi(\cO_{\hat X})-\frac{1}{4}c_1(\hat E)c_1(K_{\hat X})+\frac{1}{8}c_1(K_{\hat X})^2-\Delta(\hat E))=0$$
$$ \chi(\pi _*(\hat E))= 2(\chi(\cO_{ X})-\frac{1}{4}c_1(\pi _*(\hat E))c_1(K_{X})+\frac{1}{8}c_1(K_{X})^2-\Delta(\pi _*(\hat E)))=-2\Delta(\pi _*(\hat E)).$$ 

Combining this with $\chi(\hat E)=\chi(\pi _*(\hat E))-\chi(R^1\pi_*(\hat E))\le \chi(\pi _*(\hat E))$ and with the inequality $\Delta(\pi _*(\hat E))\ge 0$ we get
$$\chi(\hat E)=\chi(\pi _*(\hat E))=\Delta(\pi _*(\hat E))=0$$
and
$$R^1\pi_*(\hat E)=0.$$
In particular $\pi _*(\hat E)$ has to be locally free since otherwise we would have 
$\Delta((\pi _*(\hat E))^{\vee\vee})<0$.

We also get an exact sequence on $X$:
$$0\to \pi_*(\hat E)\to \pi _*(\hat E^{\vee\vee})\to \pi _*(\hat E^{\vee\vee}/ \hat E)\to 0$$
showing that $\hat E^{\vee\vee}/ \hat E=0$ and $\hat E$ is locally free.

If $F$ were a torsion free sheaf of rank two on $\hat X$ with $\det(F)=K_{\hat X}$ and $c_2(F)<0$, then the
same computations as before would give $\Delta(\pi_*(F))<0$, which is absurd.

Now $c_2(\pi _*(\hat E))=0$, hence $\pi _*(\hat E)$ cannot be stable with respect to any Gauduchon metric 
on $X$. Otherwise there would exist
some irreducible $SU(2)$-valued representation of the fundamental group of $X$ which is 
cyclic infinite, cf. \cite{Pla95}. But this is not the case. Thus $\pi _*(\hat E)$ admits some coherent subsheaf of rank one which implies that $\hat E$ also admits one. This leads to the existence of an exact sequence of the form:
$$0\to L_1\to \hat E\to L_2\otimes \cI_Z\to 0,$$
where $L_1$, $L_2$ are line bundles on $\hat X$ and $Z$ is a locally complete intersection subspace of codimension two of $X$. Now if $Z$ were not empty the vector bundle $F:=L_1\oplus L_2$ would have the same determinant as $\hat E$ but a strictly lower second Chern class, which would contradict our second assertion. Thus $Z$ must be empty.

We may write now $L_2=L(D)$ for some $L \in \Pic(X)$ and $D=\sum a_i D_i$, $a_i\in \Z$. Then $L_1=K_{\hat X} (-D)\otimes L^{-1}$ and $0=c_2(\hat E)=c_1(K_{\hat X} (-D))c_1(\cO(D))=-\sum a_i(a_i-1)$. The last sum vanishes if and only if each $a_i$ is zero or one, which proves our last claim.
\end{proof}

%Thus $\pi _*( \hat E) $ coincides with the unique locally free extension $E$ of $ \pi _*( \hat E)|_{X \setminus \{x_1, ...,x_n \} } $ to $X$. Moreover the splitting type of $\hat E$ on each exceptional line $D_i$ has to be $\cO \oplus \cO (-1)$, cf. \cite{Bu00} Lemma 2.2.
We are thus interested in extensions of the type 
%\begin{equation}%\label{ext}
 $$0 \rightarrow K_{\hat X} (-D)\otimes L^{-1} \rightarrow \hat E \rightarrow  L(D) \rightarrow 0
$$%\end{equation}
as in Proposition \ref{tip}.
 
\begin{Prop}\label{extensions} 
Let $L$ be an element of $ \Pic(X)$ and  $D= \sum a_i D_i$ with $a_i\in \{ 0,1\}$. Then the dimension of the projective space of non-trivial extensions of $ L(D)$ by 
 $ K_{\hat X}(-D)\otimes L^{-1}$ is $-D^2 + \epsilon$,  where 
 $\epsilon \ge 0$ when $h^0(X;L^{\otimes 2})\neq 0$ or when $D=0$ and 
 $h^0(X;K_X \otimes L^{\otimes -2})\neq 0$. Otherwise $\epsilon =-1$.
In particular the dimensions of the above spaces of extensions do not exceed $n-1$, unless  $h^0(X;L^{\otimes 2})\neq 0$ or $h^0(X;L^{\otimes 2})\neq 0$. 
If $X$ is not an elliptic surface, the maximal dimension for such a space is $n$ and it is attained
precisely when  $D= D_1+...+D_n$ and $h^0(L^{\otimes 2})\neq 0$.
\end{Prop}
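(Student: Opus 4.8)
The plan is to compute the dimension of $\mathrm{Ext}^1_{\hat X}(L(D), K_{\hat X}(-D)\otimes L^{-1})$ via the local-to-global spectral sequence, reducing it to cohomology on $\hat X$ of the line bundle $\cN := K_{\hat X}(-2D)\otimes L^{-2} = \mathrm{Hom}(L(D), K_{\hat X}(-D)\otimes L^{-1})$. Since both sheaves are line bundles, $\mathcal{E}xt^1$ vanishes and $\mathrm{Ext}^1 \cong H^1(\hat X; \cN)$. The projective space of nontrivial extensions then has dimension $h^1(\hat X;\cN)-1$, so the claim amounts to showing $h^1(\hat X;\cN) = -D^2+\epsilon+1$ with $\epsilon$ as stated. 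I would compute $\chi(\cN)$ by Riemann--Roch on $\hat X$: writing $\cN = \pi^*(K_X\otimes L^{-2})(-2D)$ and using $\chi(\cO_{\hat X})=\chi(\cO_X)=0$ together with $K_{\hat X}\cdot(-2D) = 2\sum a_i$ and $(-2D)^2 = -4\sum a_i^2 = -4\sum a_i$ (as $a_i\in\{0,1\}$), I expect $\chi(\cN) = D^2 = -\sum a_i$. Hence $h^1(\hat X;\cN) = h^0(\hat X;\cN)+h^2(\hat X;\cN) - D^2$, and the dimension of the extension space is $-D^2 + (h^0+h^2) - 1$, i.e. $\epsilon = h^0(\hat X;\cN)+h^2(\hat X;\cN)-1$.

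Next I would identify $h^0$ and $h^2$ with the cohomology groups named in the statement. For $h^0$: pushing forward, $\pi_*\cN = K_X\otimes L^{-2}\otimes \pi_*\cO_{\hat X}(-2D)$; since $a_i\le 1$, $\pi_*\cO_{\hat X}(-2D) = \cI_{\{x_i : a_i=1\}}$ sits inside $\cO_X$, so $h^0(\hat X;\cN) \le h^0(X; K_X\otimes L^{-2})$. But on a primary Hopf surface $K_X = \pi^*(\text{something trivial-ish})$ — more precisely $X$ has $p_g=0$, $q=1$, and $K_X\in\Pic(X)\cong\C^*$ is a specific non-trivial element unless $X$ is elliptic; generically $h^0(X;K_X\otimes L^{-2})=0$ unless $K_X\otimes L^{-2}=\cO_X$, which forces $L^{\otimes 2}=K_X$. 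For $h^2$: by Serre duality $h^2(\hat X;\cN) = h^0(\hat X; K_{\hat X}\otimes\cN^{-1}) = h^0(\hat X; \pi^*(L^{\otimes 2})(3D - \sum D_i + D_1+\dots+D_n))$... I would be careful here, computing $K_{\hat X}\otimes\cN^{-1} = K_{\hat X}\otimes K_{\hat X}^{-1}(2D)\otimes L^{\otimes 2} = L^{\otimes 2}(2D)$, whose sections inject into $H^0(X;L^{\otimes 2})$ (pushing forward $\cO_{\hat X}(2D)$ with $a_i\ge 0$ gives $\cO_X$ since $D$ is $\pi$-exceptional and effective with $\pi_*\cO_{\hat X}(2D)=\cO_X$). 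So $h^2(\hat X;\cN) = h^0(X;L^{\otimes 2})$ exactly. This pins down $\epsilon$: if $h^0(X;L^{\otimes 2})\ne 0$ then $h^2\ge 1$ so $\epsilon\ge 0$; if $h^0(X;L^{\otimes 2})=0$ and $D\ne 0$ then both $h^0(\hat X;\cN)$ (bounded by $h^0(X;K_X\otimes L^{-2})$, which is nonzero only if $L^{\otimes 2}=K_X$, but then $h^0(X;L^{\otimes 2})=h^0(X;K_X)\ne 0$, contradiction) and $h^2(\hat X;\cN)$ vanish, giving $\epsilon = -1$; and if $D=0$ then $\epsilon\ge 0$ exactly when $h^0(X;K_X\otimes L^{-2})\ne 0$ or $h^0(X;L^{\otimes 2})\ne 0$.

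For the last assertion, I would note $-D^2 = \sum a_i \le n$ with equality iff $D = D_1+\dots+D_n$, and in that case the extension space has dimension $n-1+\epsilon$, which equals $n$ precisely when $\epsilon = 1$, i.e. when $h^0(\hat X;\cN) + h^2(\hat X;\cN) = 2$. When $X$ is not elliptic, $K_X\otimes L^{-2}\ne\cO_X$ forces $h^0(\hat X;\cN)=0$ (here I use that for a primary Hopf surface a non-trivial element of $\Pic(X)\cong\C^*$ has no sections, and $K_X\otimes L^{-2}=\cO_X$ would make $X$ — having $K_X$ a $2$-torsion-type line bundle — an elliptic or more special surface; I should check which primary Hopf surfaces have $L$ with $L^{\otimes 2}=K_X$ and verify this doesn't happen in the non-elliptic case, or more simply that even if it does, $h^0(\hat X;\cN)=h^0(X;\cI_{x_i})=0$ when some $a_i=1$ — and indeed with $D=D_1+\dots+D_n$ all $a_i=1$, so $h^0(\hat X;\cN)=0$ regardless). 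Thus $\epsilon = h^2(\hat X;\cN) - 1 = h^0(X;L^{\otimes 2}) - 1$, which is $1$ iff $h^0(X;L^{\otimes 2})\ge 2$; I would need the fact that on a non-elliptic primary Hopf surface $h^0(X;M)\le 1$ for every $M\in\Pic(X)$ (true since such $X$ has no non-constant meromorphic functions of the requisite type — more carefully, $h^0(X;M)$ for $M=\pi^*(\lambda)$, $\lambda\in\C^*$, is $0$ unless $\lambda=1$ and then it is $1$). I anticipate the main obstacle is this last point: carefully justifying the bounds on $h^0$ of line bundles on a primary Hopf surface (and the dichotomy with the elliptic case), which requires invoking the explicit description of $\Pic(X)\cong\C^*$ and the structure of sections $H^0(X;\pi^*\lambda)$ for $\lambda\in\C^*$ — everything else is bookkeeping with Riemann--Roch, Serre duality, and push-forwards of the exceptional-divisor twists.
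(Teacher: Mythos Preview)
Your core computation is essentially the paper's: identify $\Ext^1$ with $H^1(\hat X;\cN)$, apply Riemann--Roch and Serre duality to arrive at $\epsilon = h^0(X;L^{\otimes 2}) + h^0(\hat X;K_{\hat X}(-2D)\otimes L^{-2}) - 1$. The paper first passes by Serre duality to $h^1(\hat X;L^2(2D))$ and applies Riemann--Roch there, but this is a cosmetic difference. One bookkeeping slip: you write $\cN = \pi^*(K_X\otimes L^{-2})(-2D)$, but since $K_{\hat X}=\pi^*K_X(\sum D_i)$ one has $\cN = \pi^*(K_X\otimes L^{-2})(\sum D_i - 2D)$; your final value $\chi(\cN)=D^2$ is nevertheless correct, though the displayed intermediate terms $K_{\hat X}\cdot(-2D)$ and $(-2D)^2$ do not by themselves yield it.

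The genuine gaps are in your case analysis for $\epsilon$. First, you assert that $h^0(X;K_X\otimes L^{-2})\ne 0$ forces $L^{\otimes 2}=K_X$, i.e.\ that a nontrivial element of $\Pic(X)\cong\C^*$ has no sections. This is false: primary Hopf surfaces generically contain one or two elliptic curves (even when the algebraic dimension is $0$), and the associated line bundles are nontrivial with $h^0=1$. Second, you then write ``but then $h^0(X;L^{\otimes 2})=h^0(X;K_X)\ne 0$'', which is wrong in any case since $p_g(X)=0$ for every Hopf surface. What \emph{is} true on a non-elliptic primary Hopf surface is the weaker bound $h^0(X;M)\le 1$ for all $M\in\Pic(X)$ (else a pencil would force $a(X)\ge 1$), and the incompatibility $h^0(X;L^{\otimes 2})\cdot h^0(X;K_X\otimes L^{-2})=0$ (since a product of sections would land in $H^0(X;K_X)=0$). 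These two facts are what you actually need to pin down $\epsilon$ and to justify the maximal-dimension claim; your sketch invokes the right kind of information but with incorrect specifics. The paper's own proof, incidentally, stops at the formula and leaves this case analysis implicit.
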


\begin{pf}
 The dimension of the projective space of non-trivial extensions of $ L(D)$ by 
 $ K_{\hat X}(-D)\otimes L^{-1}$  is computed as follows
$$\dim \Ext^1(\hat X;L(D),K_{\hat X}(-D)\otimes L^{-1})-1=h^1(\hat X;K_{\hat X}(-2D)\otimes L^{-2})-1=$$
$$h^1(\hat X;L^2(2D))-1=-\chi(L^2(2D))+h^0(\hat X;L^2(2D))+h^0(\hat X;K_{\hat X}(-2D)\otimes L^{-2})-1=$$
$$-2D^2+Dc_1(K_{\hat X})+h^0(X;L^2)+h^0(\hat X;K_{\hat X}(-2D)\otimes L^{-2})-1=$$
$$-D^2+h^0(X;L^2)+h^0(\hat X;K_{\hat X}(-2D)\otimes L^{-2})-1,$$
whence our claim.
\end{pf}

\section{The main result}

\begin{Def}
A {\bf coarse family} of vector bundles over an analytic space $X$ parameterized by an analytic space $T$
is a family of isomorphy classes $([E_t])_{t\in T}$ of holomorphic vector bundles over $X$ together with a covering of 
$T$ by open subsets $T_i$ such that over each $T_i \times X$ there exists a holomorphic vector bundle
$\cE_i$ with $\cE_i|_{\{ t\} \times X}\cong E_t$ for all $t\in T_i$.
 We say that the coarse family is {\bf effective} if the restricted families over the $T_i$-s 
are effective in the usual sense. \end{Def}

\begin{Th}
Let $\hat X$ be the blow-up of a primary Hopf surface $X$ at $n$ points and fix a Gauduchon metric
$\hat g$ on $\hat X$. Let $T$ be a compact irreducible analytic space parameterizing a coarse
family of semi-stable  rank $2$ vector bundles with determinant $K_{\hat X}$ and vanishing second Chern class
on $(\hat X, \hat g)$. Suppose that a non-empty open part of $T$ 
effectively parameterizes simple vector bundles.
Then $dim (T)<n$.  
\end{Th}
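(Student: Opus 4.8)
The plan is to bound the dimension of $T$ by relating an arbitrary simple bundle in the family to the extensions classified in Propositions \ref{tip} and \ref{extensions}, and then to estimate how much the choice of the parameters $(L,D)$ together with the choice of extension class can vary. Concretely, every member $E_t$ of the family is a locally free sheaf with $\det = K_{\hat X}$, $c_2=0$, so by Proposition \ref{tip} it sits in an exact sequence
$$0 \to K_{\hat X}(-D_t)\otimes L_t^{-1} \to E_t \to L_t(D_t) \to 0$$
with $L_t\in\Pic(X)\cong\C^*$ and $D_t = \sum a_{i,t} D_i$, $a_{i,t}\in\{0,1\}$. The first step is to organize $T$ according to the discrete datum $D_t$: since there are only finitely many possible $D$, by irreducibility of $T$ a general point lies in the (constructible, hence generically dense in some component) locus where $D_t \equiv D$ is constant; as $T$ is irreducible and the open simple locus is dense, I may assume $D_t = D$ generically. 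The second step is to estimate the dimension contribution of the sub-line-bundle: the pair $(L_t, D)$ varies in a family of dimension at most $\dim\Pic(X) = 1$ (the $\C^*$-part), and for fixed $(L,D)$ the bundle $E_t$ is determined by a point of the projective space of extensions $\P\Ext^1(\hat X; L(D), K_{\hat X}(-D)\otimes L^{-1})$, whose dimension Proposition \ref{extensions} bounds.

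The key subtlety is that distinct extension classes can give isomorphic bundles, and distinct pairs $(L,D)$ could a priori give the same $E_t$; but what we actually need is only an inequality, so it suffices to produce, over a dense open subset of $T$, a dominant (or at least finite-fibred after stratification) map from $T$ into a parameter space built from $\{(L,D)\} \times \P\Ext^1$. The honest way to do this is to note that the sub-line-bundle $K_{\hat X}(-D)\otimes L^{-1} \hookrightarrow E_t$ is the maximal destabilizing-type subsheaf: since $c_1 = K_{\hat X}$ and $c_2 = 0$, any rank-one subsheaf of $E_t$ must be one of these $L_1$'s, and for a \emph{simple} bundle this sub-line-bundle is essentially unique up to the finitely many possibilities allowed by $h^0$ computations (two sub-line-bundles with the same slope force a splitting, contradicting simplicity unless they coincide). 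Hence over the simple locus the assignment $t \mapsto (L_t, D)$ is well-defined up to finite ambiguity, and the extension class is then determined up to scalar. This realizes a generic finite-to-one (after stratifying $T$) map
$$T \dashrightarrow \coprod_{(L,D)} \P\Ext^1(\hat X; L(D), K_{\hat X}(-D)\otimes L^{-1}),$$
so $\dim T \le 1 + \max_{(L,D)}\bigl(\dim\P\Ext^1 \bigr)$.

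To finish, I combine this with Proposition \ref{extensions}: the dimension of $\P\Ext^1(\hat X;L(D),K_{\hat X}(-D)\otimes L^{-1})$ equals $-D^2 + \epsilon \le n + \epsilon$, with $-D^2 \le n$ and equality only for $D = D_1+\cdots+D_n$, and with $\epsilon \ge 0$ only when $h^0(X;L^{\otimes 2})\neq 0$ or ($D=0$ and $h^0(X;K_X\otimes L^{\otimes -2})\neq 0$). The crucial point, and the main obstacle, is to rule out the naive sum $\dim T \le 1 + n$ and instead get $\dim T \le n-1$, i.e.\ to gain two full dimensions. One gain comes from observing that when $\epsilon \ge 0$ the condition $h^0(X;L^{\otimes 2})\neq 0$ pins $L^{\otimes 2}$ (hence $L$ up to finite ambiguity, since $\Pic(X)\cong\C^*$ and $X$ is a Hopf surface, so $L^{\otimes 2}$ effective forces $L$ into a finite set — on a primary Hopf surface the only effective divisors in a given class are very constrained), killing the $\C^*$ of moduli of $L$; so either $\epsilon = -1$ and $\dim\P\Ext^1 \le n-1$ with the $+1$ from $L$, or $\epsilon \ge 0$ but then the $L$-direction contributes $0$. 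Either way $\dim T \le n - 1 + (\text{something} \le 0)$... more carefully: if $D\neq D_1+\cdots+D_n$ then $-D^2 \le n-1$; if $D = D_1+\cdots+D_n$ then $\epsilon \ge 0$ forces $h^0(L^{\otimes2})\neq 0$ which eliminates the variation of $L$, so $\dim T \le 0 + (-D^2 + \epsilon)$, and one must then check (using that the simple locus is \emph{effectively} parameterized, so the family has no trivial infinitesimal deformations coming purely from rescaling the extension) that not all of $\P\Ext^1$ can be swept out, shaving off the last dimension. The effectivity hypothesis is exactly what is needed here: it guarantees $\dim T \le \dim$ of the genuine moduli, and the bundles arising as these extensions with $h^0(L^{\otimes 2})\neq 0$ turn out not to be simple, or to move in a family of dimension strictly less than $\dim\P\Ext^1$, forcing the strict inequality $\dim T < n$. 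Verifying this last non-simplicity / dimension-drop claim — i.e.\ that the top-dimensional stratum of extensions does not consist generically of simple bundles effectively varying — is where the real work lies, and it should follow from a direct analysis of $\Hom(E_t, E_t)$ for the extensions with $D = \sum D_i$ and $h^0(L^{\otimes 2})\neq 0$, using the restriction of $E_t$ to the exceptional curves $D_i$ and to the Hopf surface itself.
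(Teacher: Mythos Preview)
Your approach has a genuine gap, and the gap is precisely where you yourself say ``the real work lies''. Allowing $L_t$ to vary in the one-parameter family $\Pic(X)\cong\C^*$ gives you only $\dim T \le 1 + (n-1) = n$ in the generic case $\epsilon=-1$, and nothing in your argument rules this out: there is no reason why the maximal sub-line-bundle of $\hat E_t$ should not move continuously with $t$, and your appeal to effectivity or non-simplicity at the end is speculation rather than argument. The case analysis you sketch (pinning $L$ when $h^0(L^{\otimes 2})\neq 0$, hoping for a dimension drop otherwise) does not close.

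The paper's idea, which you are missing, is to eliminate the continuous variation of $L$ altogether by a uniform destabilization argument using the compactness of $T$. One pushes the family forward to $X$ and shows the direct images $E_t=\pi_*\hat E_t$ form a coarse family of rank-two bundles on $X$ with $c_2=0$; these are never stable. One then considers the incidence set $Z=\{(t,w)\in T\times\C^*:\Hom(L_w,E_t)\neq 0\}$: it is proper over $\C^*$ but cannot surject (by semi-stability of $\hat E_t$), so its image is bounded. This produces a \emph{single} $L_w$ with $2\deg L_w\ge\deg K_X$ and $\Hom(L_w,E_t)\neq 0$ for \emph{every} $t\in T$. Factoring $L_w\to\hat E_t$ through its saturation gives the extension of Proposition~\ref{tip} with $\Hom(L_w,K_X\otimes L^{-1})\neq 0$, which forces $2\deg L\le\deg K_X<0$ (hence $h^0(L^{\otimes 2})=0$, so $\epsilon=-1$ and $\dim\P\Ext^1\le n-1$) and simultaneously constrains $L$ to a \emph{countable} subset of $\Pic(X)$. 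Thus the simple locus of $T$ is covered by countably many images of projective spaces of dimension $\le n-1$, and by effectivity and the universal property of the moduli space of simple sheaves one concludes $\dim T\le n-1$.
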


{\it Proof.} 
Let $\hat E_t$ be the vector bundles of the given coarse family. 
Under the above assumptions and notations
we first prove that the vector bundles $E_t:=\pi _*(\hat E_t)$ are also organized in a coarse family
over $X$. For this it is enough to check that $(id_{T_i}\times \pi)_*(\hat {\cE _i})$ are locally free
over $T_i \times X$. This statement is local around a point $(t, x_j)\in T_i \times X$, 
so suppose for the moment that $T=T_i$ 
and $X$ are just small neighbourhoods of the points $t$ and $x=x_j$.
We may also assume that $T$ is irreducible and
non-singular.
We view $\hat X$ as the zero set of a section of the pullback of 
$\cO _{\P ^1}(1)$ to $T \times X \times \P ^1$ 
and consider some locally free extension 
$\hat {\cE} '$ of $\hat {\cE}$   to $T \times X \times \P ^1$, cf. \cite{Bu00} Lemma 2.2. 
Denote by $p:T \times X \times \P ^1 \rightarrow T \times X$ the projection
and by $\iota $ the embedding of $T \times \hat X$ into $T \times X \times \P ^1$.  
From  Proposition \ref{tip} it follows that the restriction of $\hat E_t$ to an exceptional divisor $D_i$ is isomorphic to
 $\cO \oplus \cO (-1)$. By semi-continuity the splitting type
of  $\hat {\cE} '$ over each vertical line will remain $\cO \oplus \cO (-1)$. Over 
$T \times X \times \P ^1$ we have an exact sequence:
$$0 \rightarrow \hat {\cE}'(-1) \rightarrow \hat {\cE}' \rightarrow \iota _*(\hat {\cE}) \rightarrow 0$$
whose push-forward through $p$ gives:
$$ 0 \rightarrow p_*(\hat {\cE}') \rightarrow (\id_T\times\pi) _*(\hat {\cE}) \rightarrow R^1p_*(\hat {\cE}'(-1)) \rightarrow 0.$$
Thus $(\id_T\times\pi) _*(\hat {\cE}) $ will be locally free
as an extension of locally free terms.

Consider now the rank $2$ vector bundles $E_t$ over $X$. In the proof of Proposition \ref{tip} we have seen that they cannot be stable   with respect to any Gauduchon metric on $X$. Thus each $E_t$ allows  some destabilizing subsheaf
in $\Pic(X)$. The compactness of $T$ will allow us to 
find such a destabilizing subsheaf in an
uniform way:

Recall first that $\Pic(X)\cong \C ^*$ and that 
the degree map on $\Pic(X)$ with respect to a Gauduchon  %$deg : \C ^* \rightarrow 
metric on $X$ corresponds to a positive constant times the logarithm of the absolute value defined
on $\C ^*$, hence any two degree maps are proportional, \cite{LT} 1.3.15. In fact we shall use the degree map induced by $\hat g$ on $\Pic(X)$ via the embedding $\Pic(X)\to\Pic(\hat X)$.
We  denote by $L_w$ the line bundle on $X$
which corresponds to $w \in \C ^*$.  The analytic subspace 
$Z:=\{ (t,w)\in T\times \C ^* \ | \ \Hom (L_w, E_t)\neq 0 \}$
is proper over $\C ^*$ but doesn't cover $\C ^*$, otherwise we would get
subbundles $L_w$ of arbitrarily high degree whose pullbacks to $\hat X$ would contradict the
semi-stability of some $\hat E_t$. Thus the projection of $Z$ to $\C^*$ is an analytic subspace of $\C^*$ contained in a punctured closed disc of $\C$.
It follows the existence of a $w\in \C ^*$ with $2deg (L_w) \geq deg(K_X)$ and 
$Hom (L_w, E_t)\neq 0 $ for all $t\in T$.

The composition $L_w \rightarrow \pi ^*(E_t) \rightarrow \hat E_t$ 
will factorize through a line bundle $K_{\hat X} (-D)\otimes L^{-1}$ giving an extension  
$$0 \rightarrow K_{\hat X} (-D)\otimes L^{-1} \rightarrow \hat E _t 
\rightarrow  L(D) \rightarrow 0$$
as in Proposition \ref{tip}. Therefore there will be a nontrivial morphism
$L_w \rightarrow K_{ X} \otimes L^{-1} $ on $X$, hence
$2deg(L) \leq deg(K_X) <0$. From Proposition \ref{extensions} it follows that dimension of the projective space of extensions 
of the above  form 
%$$0 \rightarrow K_{\hat X} (-D)\otimes L^{-1} \rightarrow \hat E _t \rightarrow  L(D) \rightarrow 0$$
is at most $n-1$ in this case. Note also that the possible line bundles $L$ appearing in such extensions run through a countable subset of $\Pic(X)$.

%Note that if $\hat E_t$ is simple the dimension of the moduli space of simple sheaves at $\hat E_t$ equals $n$, \cite{Tom01}. 
Now  each $\hat E_t$ in our family is the middle term of such an extension. Using the effectivity hypothesis on our family and the universal property of the moduli space of simple sheaves, we see
 that the open part of $T$ which effectively parameterizes simple vector bundles
must be covered by the images of an at most countable number of 
spaces of extensions of dimensions 
less than $n$. This entails  $dim (T)<n$.
\qed

The following corollary is an immediate consequence of our considerations and of the usual dimension estimate for $\cM^s(r,L,c_2)$, \cite{Tom01}.

\begin{Cor}
The moduli space $\cM^s(2,K_{\hat X},0)$ of stable torsion free  sheaves of rank two, determinant $K_{\hat X}$ and vanishing second Chern class on $(\hat X, \hat g)$ coincides with $\cM^s_{lf}(2,K_{\hat X},0)$, has dimension at least $n$ and contains no compact component.
\end{Cor}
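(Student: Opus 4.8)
The plan is to read off the three assertions from the material already assembled, the bulk of the work having gone into the Theorem and Proposition \ref{tip}. The equality $\cM^s(2,K_{\hat X},0)=\cM^s_{lf}(2,K_{\hat X},0)$ is nothing but Proposition \ref{tip}(1): any torsion free sheaf of rank two on $\hat X$ with determinant $K_{\hat X}$ and $c_2=0$ is forced to be locally free, so the two moduli problems — and hence the two moduli spaces, as complex analytic spaces — agree. Non-emptiness of $\cM^s(2,K_{\hat X},0)$, which is needed for the remaining assertions to have content, follows from exhibiting a \emph{stable} bundle as a suitable extension of the type classified in Proposition \ref{extensions} (or from Teleman's constructions).

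For the dimension bound I would invoke the standard deformation-theoretic estimate for moduli of stable sheaves of fixed determinant on a surface, as in \cite{Tom01}: every irreducible component of $\cM^s(2,K_{\hat X},0)$ passing through a point $[\hat E]$ has dimension at least $\dim\Ext^1(\hat E,\hat E)_0-\dim\Ext^2(\hat E,\hat E)_0$, the subscript $0$ denoting the trace-free part. Since $\hat E$ is stable it is simple, so $\dim\Hom(\hat E,\hat E)_0=0$, and the alternating sum of the $\dim\Ext^i(\hat E,\hat E)_0$ equals $\chi(\hat E,\hat E)-\chi(\cO_{\hat X})$. By Riemann--Roch $\chi(\hat E,\hat E)=4\bigl(\chi(\cO_{\hat X})-2\Delta(\hat E)\bigr)$, and since $\chi(\cO_{\hat X})=\chi(\cO_X)=0$ for a (blown-up) primary Hopf surface while $\Delta(\hat E)=n/8$ by Proposition \ref{tip}(1), this alternating sum equals $-n$; therefore the lower bound works out to $0-(-n)=n$. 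Hence every irreducible component of $\cM^s(2,K_{\hat X},0)$ has dimension at least $n$.

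Finally, suppose for contradiction that some connected component of $\cM^s(2,K_{\hat X},0)$ were compact, and let $T$ be one of its irreducible components; then $T$ is a compact irreducible analytic space. Every point of $T$ is a stable — hence semi-stable and simple — rank two vector bundle with determinant $K_{\hat X}$ and vanishing second Chern class, local freeness being automatic by part (1) already used. Since the moduli space of simple sheaves is locally a fine moduli space, $T$ carries a coarse family of such bundles in the sense of our definition, and because the Kodaira--Spencer maps of these local families are isomorphisms onto the tangent spaces of the moduli space, the family is everywhere effective. Thus $T$ meets all the hypotheses of the Theorem, which yields $\dim T<n$, contradicting the previous paragraph. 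Therefore $\cM^s(2,K_{\hat X},0)$ has no compact component.

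The only point demanding genuine care is this last verification: one must be sure that a would-be compact component really does fit the setup of the Theorem — that it carries an \emph{effective} coarse family of \emph{simple} bundles (which is where local universal families for simple sheaves and the implication stable $\Rightarrow$ simple enter) and that its members are semi-stable (immediate from stability). Everything else is a direct splicing of Propositions \ref{tip} and \ref{extensions}, the Theorem, and Riemann--Roch; in particular it is the numerology $\chi(\cO_X)=0$, $\Delta(\hat E)=n/8$ that makes the deformation-theoretic lower bound $\ge n$ collide exactly with the upper bound $<n$ supplied by the Theorem.
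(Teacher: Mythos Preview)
Your argument is correct and is exactly the unpacking the paper has in mind: the paper's own proof is the single sentence ``an immediate consequence of our considerations and of the usual dimension estimate for $\cM^s(r,L,c_2)$, \cite{Tom01}'', and you have spelled out precisely those two ingredients --- Proposition~\ref{tip}(1) for $\cM^s=\cM^s_{lf}$, the Riemann--Roch computation $\chi(\hat E,\hat E)_0=-n$ for the lower bound $\dim\ge n$, and the Theorem for the upper bound $\dim<n$ on any would-be compact component. One small remark: non-emptiness is \emph{not} part of the Corollary (it genuinely depends on the choice of $\hat g$, as the paper notes immediately afterwards), so you need not argue it; the statement should be read as ``every irreducible component has dimension $\ge n$'', which is vacuous when the space is empty.
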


Remark that there are choices of Gauduchon metrics $ \hat g$ such that $\cM^s(2,K_{\hat X},0)$ is not empty. Indeed it was shown in \cite{Tel06} that there even exist metrics $ \hat g$  such that the central term of the unique ``canonical extension''
$$0 \rightarrow K_{\hat X} \rightarrow \hat E \rightarrow  \cO_{\hat X} \rightarrow 0$$
is stable.

%%%%%%%%%%%%%%%%%%%%%%%%%%%%%%%%%%%%%%%%%%%%%%%%%%%%%%%%%%%%%%%%%%%%%%%

 \hrule \medskip
\par\noindent

\end{document}